\numberwithin{equation}{section}
\newtheorem{thmm}{Theorem}
\newtheorem{thm}{Theorem}[section]
\newtheorem{lem}[thm]{Lemma}
\newtheorem{rem}[thm]{Remark}
\newcommand\cC{{\mathcal C}}
\newcommand\cL{{\mathcal L}}
\newcommand\cO{{\mathcal O}}
\newcommand\bC{{\mathbb C}}
\newcommand\bN{{\mathbb N}}
\newcommand\bR{{\mathbb R}}
\newcommand\ve{\varepsilon}
\newcommand\vf{\varphi}
\newcommand\Id{{\mathds{1}}}
\renewcommand{\div}{{\operatorname{div}\,}}
\begin{document}

\title[Expanding maps with singularities]{Multidimensional expanding maps with singularities: a pedestrian approach}
\author{Carlangelo Liverani}
\address{Carlangelo Liverani\\
Dipartimento di Matematica\\
II Universit\`{a} di Roma (Tor Vergata)\\
Via della Ricerca Scientifica, 00133 Roma, Italy.}
\email{{\tt liverani@mat.uniroma2.it}}
\date{October 10, 2011}
\begin{abstract}
I provide a proof of the existence of absolutely continuous invariant measures (and study their statistical properties) for multidimensional piecewise expanding systems with not necessarily bounded derivative or distortion. The proof uses basic properties of multidimensional BV functions.
\end{abstract}
\keywords{Expanding maps, decay of correlations, Transfer operator.}
\subjclass[2000]{37A05, 37A50, 37D50, 37H99}
\thanks{It is a pleasure to thank Viviane Baladi and Gabriella Tarantello for helpful discussions. Also I like to thanks the Fields Institute, Toronto, where the paper was completed. Work supported by the European Advanced Grant MALADY (ERC AdG 246953). Last, I'd like to thank the anonymous referee for pointing out a really silly mistake and saving me from public embarrassment.}
\maketitle

%%%PAPER

\section{introduction}\label{sec:intro}
Lately several authors attempted to treat billiards directly by transfer operator methods. While the first attempts can be traced, at least, as far back as 1995,\footnote{ See \cite{Li1} which was in fact applicable to billiards by introducing the usual homogeneity strips strategy, even though such an extension was not present in the published version.} most of the related work is quite recent and is motivated by the anisotropic Banach space approach started in \cite{BKL}. Such an approach has been very successful in the smooth case \cite{GL1,GL2, Ba, BT1,BT2, Li2, Li3, LT, BL} but its extension to the piecewise smooth case is still unsatisfactory in spite of encouraging recent progress, see \cite{DL, BG1, BG2, BaLi}. 
The hopes motivating such a program are to obtain simpler, stronger and more flexible proofs for, e.g., decay of correlations, statistical stability and limit laws.

Given such a situation, it is natural to reconsider the piecewise expanding case. The payoff of this revisitation is twofold. On the one hand I generalize the existing results to the case in which the derivative of the map is unbounded and there may be countably many domains of smoothness. On the other hand I obtain a more streamlined and elementary proof. Hence this note both contains new results and is a useful warm up for the hyperbolic setting. 

More precisely, I show that the blow up of the derivative can be, at least in some cases, easily handled without resorting to the ``homogeneity strips" strategy usually employed in billiards. The method presented here is the higher dimensional analogous of the one introduced in \cite{Ry}. I work in the piecewise $\cC^2$ setting. One could extend the theory to the $\cC^{1+\alpha}$ case, but she would have to use a generalization of $BV$ spaces that would considerably cloud the argument (but see \cite{Th} for a possible alternative).

I use the space of functions of bounded variation (unlike some more unconventional spaces as in \cite{Sa}) and very little of the theory of bounded variation functions (e.g. no computations based on the traces of a BV function is used, contrary to most of the literature starting from \cite{Ke, GB} to the more recent \cite{Co1, Co2}).

\section{The class of maps}
Let us consider classes of maps $T:\Omega\to\overline\Omega$, $\Omega=\mathring  \Omega\subset [0,1]^d$, satisfying specified subsets of the following conditions.\footnote{ If a given $T$ does not satisfy the conditions below, they may still hold for some power of $T$.}
Let $m$ be the Lebesgue measure.
\begin{enumerate}[{\bf 1.}]  \setcounter{enumi}{-1}
\item \label{c:0} For each $n\in\bN$, $m(T^{-n}\partial\Omega)=0$.
\item \label{c:1}There exists a countable collection of disjoint connected open sets $\{\Omega_k\}$, $\cup_k\Omega_k=\Omega$,\footnote{ Note that, with the present definition, $\Omega$ is necessarily disconnected unless $\{\Omega_k\}$ consists of a single element. Moreover, one can check that  $\overline{\cup_k\partial\Omega_k}=\partial\Omega$.} such that $T$ is $\cC^2$, with $\cC^1$ inverse, when restricted to each $\Omega_k$.

\item \label{c:2} For all $k\in\bN$, $T|_{\Omega_k}$ and $(DT)^{-1}|_{\Omega_k}$ have a continuous extension to $\overline\Omega_k$.\footnote{ I will use the notations $(D_xT)^{-1}$ or $[(DT)^{-1}](x)$ for the inverse of the matrix $DT$ computed at the point $x$.}

\item  \label{c:4} For each $j\in\{1,\dots,d\}$,\footnote{ From now on the notation $x_{\neq j}$ stands for the vector $(x_1,\dots,x_{j-1},x_{j+1},\dots, x_d)$ and $(x_{\neq j},y)$ stands for $(x_1,\dots,x_{j-1},y,x_{j+1},\dots, x_d)$.} $x_{\neq j}\in\bR^{d-1}$ and $k\in\bN$, $\Omega_k\cap \{(x_{\neq j},y)\}_{y\in\bR}=\cup_\ell \{x_{\neq j}\}\times I_{k,\ell}(x_{\neq j})$, where the $I_{k,\ell}(x_{\neq j})\subset \bR$ are, possibly countably many, disjoint open intervals.\footnote{ Note that, for some $x_{\neq j}$, the set $\{I_{k,\ell}(x_{\neq j})\}$ may be empty.} In addition, there exist $\delta>0$ and $\lambda>1$ such that\footnote{ The following could be replaced by a weaker condition in which one takes an inf on all the possible orthonormal bases. This is due to the fact that in definition \eqref{eq:alpha0} appears the divergence which is rotationally invariant. I avoid this generalization to favor readability. In particular, to implement such a strategy I would be forced to use a rotation invariant norm, such as the Euclidean one, instead of $\|\vf(x)\|_\infty=\max_{i\in\{1,\dots,d\}}|\vf_i(x)|$, $\|A(x)\|_\infty=\max_i\sum_j|A(x)_{i,j}|$, $\|\vf\|_{L^\infty}=\sup_{x}\|\vf_i(x)\|_\infty$ and $\|A\|_{L^\infty}=\sup_x\|A(x)\|_\infty$, which I find more convenient here.}
\[
\sup_{x\in [0,1]^d}\sup_j\sum_{\{k,\ell\;:\; [x_j-\delta,x_j+\delta]\cap I_{k,\ell}(x_{\neq j})\neq \emptyset\}}\left\|\Id_{I_{k,\ell}(x_{\neq j})}(\cdot)[(DT)^{-1}](x_{\neq j}, \cdot)\right\|_{L^\infty}\leq\lambda^{-1},
\]
where $\Id_A$ stands for the indicator function of the set $A$.
\item \label{c:5}
For each $k\in\bN$ let $\Omega_k^\ve=\{x\in\Omega_k\;:\; \operatorname{dist}(x,\partial\Omega_k)\geq \ve\}$ and $\partial^\ve\Omega_k=\Omega_k\setminus\Omega^\ve_k$. Then, for each $j\in\{1,\cdots, d\}$,
\[
\lim_{\ve\to 0}\sup_{x_{\neq j}}\sum_{k}\int_0^1\|\partial_{x_j}[(DT)^{-1}](x_{\neq j},y)\|_\infty\Id_{\partial^\ve\Omega_k}(x_{\neq j},y)\, dy=0.
\]
\item\label{c:3} For each $i,j,k\in\{1,\dots,d\}$, $(DT)_{kj}[(DT)^{-1}]_{ji}\in L^1(\Omega,m)$. 
\item \label{c:6}There exist $C>0$, $\alpha\in (0,1]$ and $a\geq1$ such that for each $x\in\Omega$ and $\ve>0$ holds true\footnote{ One could even allow an exponential blow up at singularities, provided the rate is taken small enough. It suffices to truncate the restricted transfer operators (defined in section \ref{sec:open}) at a polynomial distance from the singularity rather than at an exponential distance. I choose not to pursue this generalization because on the one hand it is obvious and on the other hand it would only make the argument less transparent.}
\[
m(\cup_k\partial^\ve\Omega_k)\leq C^d\ve^\alpha\;;\quad \|\nabla\det( D_xT)\|_\infty\leq Cd(x,\partial\Omega)^{-a}.
\]
\end{enumerate}
\begin{rem}The above conditions are not all independent, I introduced some partial repetitions to make them more transparent. The zeroth condition simply ensures that the dynamics is well defined Lebesgue almost surely. In the case $d=1$, the first two conditions reduce to the usual definition of piecewise expanding maps.  Condition three reduces, in one dimension and when the partition is finite, to the standard condition that the expansion be larger than two.\footnote{ In fact, it is slightly weaker insofar it requires that the sum of the inverse of the minimal expansion of any two consecutive intervals is less than one. At any rate in one dimension the larger than two condition can always be satisfied, for some power of $T$, provided the expansion is larger than one. This in not true in higher dimensions (see \cite{Ts1, Bu2}) although it can be achieved in special cases (see \cite{Bu1,Bu3,Ts2}).}  In higher dimensions, for a finite partition, condition three is implied by the condition that the boundaries of the $\Omega_k$ are transversal to the coordinate axes and that the maximal number of $\overline\Omega_k$ covering a point is strictly bounded by the minimal expansion ($\|(DT)^{-1}\|_{L^\infty}^{-1}$). In one dimension condition four simply says that the inverse of the derivative must be locally in $W^{1,1}$ (i.e. its derivative must be in $L^1$), this is essentially Rychlik's condition \cite{Ry}. In higher dimensions, assuming a finite partition with transversal boundaries, condition four is implied by a uniform integrability conditions for the functions $\|\partial_{x_j}[(DT)^{-1}](x_{\neq j},\cdot)\|_\infty$.\footnote{ I doubt that having only the inverse of the derivative locally in $W^{1,1}$ suffices to prove Theorem \ref{thm:main}.} Condition five trivializes in one dimension and is fairly weak in any dimension. Finally, the last condition is not needed if $d=1$ (see Theorem \ref{thm:main}).
\end{rem}
As in the one dimensional case we will work with the $BV$ and the $L^1$ norms, but $BV$ needs a word of explanation.
For each $h\in L^1(\bR^d, m)$  define\footnote{ By $\cC^r_0$ I mean the vector space of $r$-times differentiable functions with compact support. Here and in the following I will not specify the variable of integration if no confusion arises. In any case all the integrations are with respect to the Lebesgue measure unless otherwise stated.}
\begin{equation}\label{eq:alpha0}
\|h\|_{BV(\bR^d)}:=\sup_{\substack{\vf\in\cC^1_0(\bR^d,\bR^d)\\ \|\vf\|_{L^\infty}\leq 1}}\int_{\bR^d} h\cdot\div\vf\,.
\end{equation}
As usual $BV(\bR^d)=\{h\in L^1(\bR^d,m)\;:\; \|h\|_{BV(\bR^d)}<\infty\}$. Next,
\begin{equation}\label{eq:alpha}
\|h\|_{BV(\Omega)}:=\|\Id_\Omega h\|_{BV(\bR^d)},
\end{equation}
and $BV(\Omega)=\{h\in BV(\bR^d)\;:\; \operatorname{supp} h\subset \Omega\}$. From now on I will use $BV$ to designate $BV(\Omega)$, if no confusion arises.\footnote{ Note that $BV(\Omega)$ is a closed vector space under the $BV(\bR^d)$ norm and that \eqref{eq:alpha} is a norm in $BV(\Omega)$. Also, for $h\in L^1(\Omega,m)$ I will often write $h\in BV(\Omega)$ to mean $\Id_\Omega h\in BV(\Omega)$, i.e. the functions are extended to be zero on $\Omega^c$.} For the reader convenience I collect in Appendix \ref{sec:bv} the relevant properties of BV functions.
\begin{rem}
Note that the alternative to define the norm in \eqref{eq:alpha} for each $h\in L^1(\Omega)$ directly by \eqref{eq:alpha0} with the sup restricted to the $\vf$ with support contained in $\Omega$ would not work (in particular Lemma \ref{lem:lasota-yorke} may fail, see \cite{PGB} for a counterexample).
\end{rem}

We are now ready to state precisely the result proved in this note.

\begin{thmm}\label{thm:main}
Any map satisfying assumptions {\bf \ref{c:0}-\ref{c:3}} has (at most finitely many) invariant measures absolutely continuous with respect to Lebesgue. Such measures have densities that belong to $BV$ and their supports yield the ergodic decomposition of the Lebesgue measure.\footnote{ That is, all the indecomposable invariant sets ($\!\!\!\mod 0$) of positive Lebesgue measure.} If there exists only one absolutely continuous invariant measure $\mu$, with density $h_*$, then the following alternative holds: either the dynamical system $(\Omega, T,\mu)$ is not mixing or there exist $\sigma\in(\lambda^{-1},1)$ and $C>0$ such that, for all $h\in BV(\Omega)$ and $f\in L^d(\Omega,m)$,\footnote{ To be precise $f\circ T^n$ is defined only on $\cap_{n\in\bN}T^{-n}\Omega\subset\Omega$ which differs from $\Omega$ for a zero measure set by hypothesis {\bf\ref{c:0}}. Since it is irrelevant, I will not comment on this any further.}
\[
\left|\int_\Omega f\circ T^n h-\int_\Omega h \cdot \int_\Omega h_* f\right|\leq C\|h\|_{BV}\|f\|_{L^d}\; \sigma^{n}.
\]
If the first alternative holds true, then there exist $N\in\bN$ and disjoint  sets $\{\widetilde \Omega_j\}_{j=1}^N$, $\Omega=\cup_j\widetilde \Omega_j$ Lebesgue a.s., such that $T\widetilde\Omega_j=\widetilde\Omega_{j+1}$ for $j<N$ and $T\widetilde\Omega_N=\widetilde\Omega_{1}$. Moreover, for each $j\leq N$, $(T^N, \widetilde \Omega_j,\mu)$ is an exponentially mixing dynamical system.

Finally, if $d=1$ or if the map satisfies also assumption {\bf \ref{c:6}}, then there exists $\ve_*>0$ such that for each absolutely continuous invariant measure the associated density is uniformly positive in an open ball of size $\ve_*$. In particular, the ergodic components of the Lebesgue measure are open ($\!\!\!\mod 0$).
\end{thmm}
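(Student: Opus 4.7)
The plan is to study the transfer operator $\cL:BV(\Omega)\to BV(\Omega)$ defined by duality, $\int (\cL h)\,f\, dm = \int h\,(f\circ T)\, dm$, and to read off all dynamical information from its spectral theory. The linchpin is a Lasota--Yorke inequality of the form
\[
\|\cL^n h\|_{BV}\leq C\sigma^n\|h\|_{BV}+C_n\|h\|_{L^1},
\]
for some $\sigma\in(\lambda^{-1},1)$. To establish it, take a test vector field $\vf\in\cC^1_0(\bR^d,\bR^d)$ with $\|\vf\|_{L^\infty}\leq 1$, perform the change of variables $x=T_k^{-1}y$ on each $\Omega_k$, and integrate by parts. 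The contracting factor $\lambda^{-1}$ comes from condition \textbf{\ref{c:4}}: summing the ``principal'' divergence contribution over the cells intersecting a thin vertical strip. The ``derivative-of-$(DT)^{-1}$'' terms are shown to be $L^1$-small off the boundary layer $\cup_k\partial^\ve\Omega_k$ by condition \textbf{\ref{c:5}}, while the boundary traces along $\partial\Omega_k$ (handled via the trace inequality for BV functions recalled in Appendix \ref{sec:bv}) are controlled again by condition \textbf{\ref{c:4}}. In dimension $d\geq 2$, the blow-up of $\det DT$ near $\partial\Omega$ is absorbed via assumption \textbf{\ref{c:6}} by truncating the restricted transfer operator at an exponentially small distance from the singular set.

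Once the Lasota--Yorke estimate is in hand, the compact embedding $BV(\Omega)\hookrightarrow L^1(\Omega,m)$ and Hennion's theorem yield quasi-compactness of $\cL$ on $BV$ with essential spectral radius at most $\sigma$. The peripheral spectrum then consists of finitely many eigenvalues with finite-dimensional generalized eigenspaces; a standard argument shows these are semi-simple and that the invariant densities can be chosen with disjoint supports, giving the (finite) ergodic decomposition of Lebesgue measure. Assuming uniqueness of $\mu$, either $1$ is the only peripheral eigenvalue or the peripheral spectrum is the cyclic group of $N$-th roots of unity. In the former case, the spectral gap gives $\|\cL^n h - h_*\int h\|_{BV}\leq C\sigma^n \|h\|_{BV}$; rewriting the correlation as $\int f\cdot \cL^n(h-h_*\int h)$ and applying the Sobolev-type embedding $BV\hookrightarrow L^{d/(d-1)}$ together with H\"older's inequality against $f\in L^d$ yields the stated rate. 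In the cyclic case, the eigenfunctions for the nontrivial peripheral eigenvalues partition $\Omega$ into the sets $\widetilde\Omega_j$, and $\cL^N$ restricted to each is mixing with a spectral gap.

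For the final positivity statement, the argument is based on bounded distortion, trivial in $d=1$ (Rychlik) and reliant on condition \textbf{\ref{c:6}} in higher dimensions. Since $h_*\in BV$ has $\int h_*=1$, the set $\{h_*\geq \eta\}$ has positive measure for some $\eta>0$, and by the approximate-continuity property of BV functions one finds a small ball $B$ on which $h_*$ is essentially bounded below. Using the polynomial control $\|\nabla\det DT\|_\infty\leq Cd(x,\partial\Omega)^{-a}$ and the boundary-measure estimate $m(\cup_k\partial^\ve\Omega_k)\leq C^d\ve^\alpha$ to keep a definite fraction of $T^n B$ away from the singular set, and invoking the invariance $\cL h_*=h_*$, one propagates the lower bound until its support contains a ball of a uniform radius $\ve_*$ independent of $\mu$.

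The main obstacle will be the Lasota--Yorke inequality itself: one must delicately balance the boundary trace terms (governed by the strip condition \textbf{\ref{c:4}}) against the bulk divergence of $(DT)^{-1}$ (condition \textbf{\ref{c:5}}), while simultaneously absorbing the singular blow-up of $\det DT$ via condition \textbf{\ref{c:6}}, and all of this in the setting of countably many smoothness pieces with unbounded derivative. The remaining spectral-theoretic ingredients are essentially automatic once the Lasota--Yorke bound is available.
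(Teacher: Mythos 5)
Your high-level architecture (transfer operator, Lasota--Yorke, Hennion, spectral decomposition, distortion for positivity) matches the paper, but two of the technical claims in your sketch of the Lasota--Yorke inequality diverge from what the paper actually does, and one of them would likely not work. First, you propose to control the boundary terms along $\partial\Omega_k$ ``via the trace inequality for BV functions recalled in Appendix~\ref{sec:bv}.'' No such inequality appears there, and the introduction explicitly states that \emph{no} computations based on BV traces are used---this is a deliberate design choice, motivated by the known failure of naive trace-based estimates in higher dimensions (the Proppe--G\'ora--Boyarsky counterexample cited in the remark after \eqref{eq:alpha}). The paper instead constructs admissible test functions $\Theta_j=\sum_k\overline\Psi_{j,k}$ by gluing the one-sided values $\Psi^\pm_{j,k,\ell}$ with linear ramps of width $\delta$, making each $\Theta_j$ continuous (indeed a uniformly convergent series of continuous functions, by condition~\textbf{\ref{c:4}}), and then appeals to the enlarged class of legal test functions permitted by \eqref{eq:legal}. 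The price is the extra $d\delta^{-1}\lambda^{-1}\|h\|_{L^1}$ term, but no trace operator ever appears. Second, you claim that in $d\geq 2$ the blow-up of $\det DT$ is ``absorbed via assumption~\textbf{\ref{c:6}}'' inside the Lasota--Yorke proof ``by truncating the restricted transfer operator at an exponentially small distance.'' This is misplaced: Lemma~\ref{lem:lasota-yorke} is proved under hypotheses \textbf{\ref{c:0}--\ref{c:3}} only, and the restricted operators $\cL_{\ve,n}$ with exponentially shrinking cutoffs appear only in Lemma~\ref{lem:open} (the openness/positivity step), together with the mollified densities $h_\delta$ and the distortion bound \eqref{eq:disto}.

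The remaining parts---quasi-compactness via Hennion, peripheral spectrum yielding the finite ergodic decomposition and cyclic structure, the decay rate obtained from $BV\hookrightarrow L^{d/(d-1)}$ paired with $f\in L^d$---are essentially as in the paper and correct. Your positivity sketch captures the right ingredients (bounded distortion, boundary-measure estimate, invariance) but is too coarse to verify: the paper's argument is more delicate, running backward along preimages of a carefully selected point $x_0$ that survives the open dynamics $\widetilde\cL_{\ve,n}$ for infinitely many $n$, and comparing $\cL^n h_{\delta_n}$ to its truncated counterpart via \eqref{eq:boundary1}. If you want your proof to stand independently, you must replace the trace-inequality step with an explicit construction of admissible test functions (or an equivalent device), and you should delineate clearly that condition~\textbf{\ref{c:6}} enters only the positivity/openness lemma, not the Lasota--Yorke estimate.
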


\begin{rem} Note that the above Theorem is constructive since $\ve_*$ can be computed by looking at the proof and keeping track of the various constants.\footnote{ I choose not to do this in favor of readability.} This provides a bound on the number of ergodic components. One can then prove that there is only one ergodic component by finding an $\ve_*$-dense orbit. Moreover, if there is only one invariant measure and two periodic orbits (away from $\partial\Omega$) that have coprime periods, then the measure is mixing.
\end{rem}
\begin{rem} It is important to remark that many other results can be obtained once Theorem 1 (or rather the spectral picture, based on Lemma \ref{lem:lasota-yorke}, from which the Theorem follows) is established. For example, approximation results (extending the ones in \cite{Mu}) follow by applying the theorems in \cite{KL2,KL3}. Also various limit theorems (in particular the Central Limit Theorem as in \cite{Li4}) can be obtained by using, e.g., the results in \cite{Gou1}. Finally, the interested reader can easily extend the present arguments to more general transfer operators by imposing appropriate conditions on the potentials, in the spirit of \cite{Ry}.
\end{rem}
\subsection{An example}
Here is an example (chosen more or less at random). Let $\Omega\subset (0,1)^2$ be given by the smoothness domains of the map
\[
T(x,y)=(a\sqrt x,b y+\sqrt x)-\lfloor(a\sqrt x,b y+\sqrt x)\rfloor,
\]
where $a,b>6$ and $\lfloor c\rfloor$ is the integer part of $c$.
Then
\[
(DT)^{-1}=\begin{pmatrix}\frac {2\sqrt x}a & 0\\
-\frac 1{ab}& \frac 1b\end{pmatrix}.
\]
It is easy to check that the above map satisfies all the conditions of Theorem \ref{thm:main}.

\begin{proof}[{\bf Proof of Theorem \ref{thm:main}}]
As usual, one first defines the transfer operator and then proves a Lasota-Yorke inequality. More precisely, for each $x\in \Omega$, 
\begin{equation}\label{eq:transfer}
\cL h(x):=\sum_{y\in T^{-1} (x)}|\det D_yT|^{-1} h(y).
\end{equation}
Then for each $h\in L^1(\bR^d,m)$ and $\vf\in L^\infty(\bR^d,m)$ we have\footnote{ In fact \eqref{eq:transfer1} is often taken as the definition of $\cL$. Formula \eqref{eq:transfer} follows then by changing variables, remembering hypothesis {\bf\ref{c:0}} and using the Lebesgue monotone convergence Theorem to prove almost sure convergence of the right hand side.}
\begin{equation}\label{eq:transfer1}
\int_\Omega h\cdot \vf\circ T=\int_\Omega \vf\cdot \cL h.
\end{equation}
Hence, $\Id_\Omega\cL h=\cL(\Id_\Omega h)$ a.s..
The Lasota-Yorke inequality is proven in Lemma \ref{lem:lasota-yorke}. It follows that $\cL$ is a bounded operator from $L^1(\Omega,m)$, and from $BV(\Omega)$, to itself.

Using standard arguments (see \cite{Babo} for a general discussion, here I use Hennion's Theorem in \cite{He}) Lemma \ref{lem:lasota-yorke} implies that $\cL$ is a quasi-compact operator with spectral radius one and essential spectral radius bounded by $\lambda^{-1}$. In addition, one is an eigenvalue since $\int h=\int \cL h$, for each $h\in L^1$, implies that one belongs to the spectrum of the dual operator $\cL'$. Thus there exists a spectral gap between the eigenvalues of modulus one and the rest of the spectrum. Accordingly, we can write $\cL=\Pi+R$ where $\Pi$ is a finite rank operator with spectrum contained in $\{z\in\bC\;:\; |z|=1\}\cup\{0\}$ while the spectral radius of $R$ is strictly smaller than one and $\Pi R=R\Pi=0$. In addition, the second of the \eqref{eq:lasota} shows that $\Pi$ must be power bounded, hence it cannot contain Jordan blocks. Thus, we can further decompose $\Pi=\sum_je^{i\theta_j}\Pi_{\theta_j}$, $\Pi_{\theta_j}\Pi_{\theta_k}=\delta_{jk} \Pi_{\theta_j}$.\footnote{ Note that one should complexify BV. I do not give details as they are standard.} By the spectral gap
\begin{equation}\label{eq:decomp}
\lim_{n\to\infty}\frac 1n \sum_{k=0}^{n-1}e^{-i\theta k}\cL^k=\begin{cases} \Pi_{\theta_j}\quad&\text{if } \theta=\theta_j\\
0&\text{otherwise,}
\end{cases}
\end{equation}
where the limit is in the strong operator topology in $L(BV,BV)$. Again, the second of \eqref{eq:lasota} implies that the $\Pi_{\theta_j}$ are bounded operators from $L^1$ to $BV$. In particular, $\Pi_0 1\in BV$ is the density of an invariant measure. On the other hand, if $\mu$ is an absolutely continuous invariant measure with density $h$, \eqref{eq:decomp} implies that $h=\Pi_0 h$, hence $h\in BV$. That is, all the invariant densities belong to the finite dimensional range of $\Pi_0$.

Next, observe that $\Pi_{\theta_j}(g)=\sum_\kappa\ell_{j,\kappa}(g) h_{j,\kappa}$, where $h_{j,\kappa}\in BV$ and $\ell_{j,\kappa}\in (L^1)'$ and $\ell_{j,\kappa}(h_{r,s})=\delta_{j,r}\delta_{\kappa,s}$. That is, there exists $\rho_{j,\kappa}\in L^\infty$ such that $\ell_{j,\kappa}(g)=\int_\Omega \rho_{j,\kappa}\cdot g$, for all $g\in L^1$. Moreover, $\Pi_{\theta_0}:=\Pi_0$ is a positive projector and $\int \Pi_0 h=\int h$, hence $\sum_\kappa m(h_{0,\kappa})\rho_{0,\kappa}=1$, which implies $\rho_{0,\kappa}=\Id_{A_\kappa}m(h_{0,\kappa})^{-1}$, where the $\{A_\kappa\}$ are disjoint indecomposable invariant sets\footnote{ Remember that, by construction, $\cL\Pi_{\theta_j}= \Pi_{\theta_j}\cL=e^{i\theta_j}\Pi_{\theta_j}$ which implies $\cL'\ell_{0,\kappa}=\ell_{0,\kappa}$ and thus $\rho_{0,\kappa}\circ T=\rho_{0,\kappa}$ a.s.. In turns, this means that $\rho_{0,\kappa}$ can take only one value (besides zero) otherwise we could construct new invariant sets, and hence invariant measures, which we know do not exists. For the same reasons we must have $\rho_{0,\kappa}\rho_{0,j}=\delta_{\kappa, j}\rho_{0,\kappa}^2$.} and must therefore correspond to the ergodic decomposition of Lebesgue. In addition, for all $g\in L^1$,
\[
\begin{split}
&\int \rho_{j,\kappa}\rho_{j',\kappa'} \cL g=\int (\rho_{j,\kappa}\rho_{j',\kappa'})\circ T \cdot g=e^{i(\theta_j+\theta_j')} \int \rho_{j,\kappa}\rho_{j',\kappa'}g\\
&\int \bar \rho_{j,\kappa} \cL g=e^{-i\theta_j}\int \bar \rho_{j,\kappa} g.
\end{split}
\]
Hence $e^{-i\theta_j}\in\sigma(\cL')=\sigma(\cL)$ and either $\rho_{j,\kappa}\rho_{j',\kappa'}=0$ or $e^{i(\theta_j+\theta_{j'})}\in\sigma(\cL)$. I.e., the peripheral eigenvalues form finitely many cyclic groups, each related to a different ergodic component (see \cite{Babo} for details).

If the peripheral spectrum consist of one as a simple eigenvalue then $\Pi$ is a rank one operator and setting $h_*=\Pi 1$ for each $h\in BV$ and $f\in L^d$ holds (remembering \eqref{eq:sobo})
\[
\int_\Omega f\circ T^n h=\int _\Omega f\cL^n h=\int_\Omega f h_* \cdot \int_\Omega h +\cO(\|f\|_{L^d}\| R\|_{BV}^n \|h\|_{BV})
\]
which gives the announced exponential decay of correlations. Similar arguments work for $(T^N, \widetilde \Omega_j, \mu)$ in the general case.

The only thing left to prove is the openness of the ergodic components. If $d=1$, then it follows from the fact that, in one dimension only, $BV$ functions are strictly positive over some ball (i.e. interval) and by the standard argument in the last paragraph of the proof of Lemma \ref{lem:open}. If $d>1$, then the result follows from hypothesis {\bf \ref{c:6}} and is proven in Lemma \ref{lem:open}.
\end{proof}
\begin{rem} In the following we will use $C_\#$ to designate a generic constant, depending only on the map $T$ and the constants appearing in the hypotheses  {\bf \ref{c:0}-\ref{c:6}}, which values can change from line to line. On the contrary we will use $C_{a,b,c,\dots}$ for a generic constant depending on the parameters $a,b,c,\dots$.
\end{rem}

\section{Lasota-Yorke inequality}\label{sec:lasota}
\begin{lem}\label{lem:lasota-yorke}
For each map $T$ satisfying hypotheses {\bf \ref{c:0}-\ref{c:3}} and for each $\sigma\in (\lambda^{-1},1)$ there exists $B>0$ such that, for all $n\in\bN$ and $h\in BV$, holds true
\begin{equation}\label{eq:lasota}
\begin{split}
&\|\cL^nh\|_{L^1(\Omega,m)}\leq \|h\|_{L^1(\Omega,m)}\,,\\
&\|\cL^nh\|_{BV(\Omega)}\leq \sigma^{n} \|h\|_{BV(\Omega)}+B\|h\|_{L^1(\Omega,m)}.
\end{split}
\end{equation}
\end{lem}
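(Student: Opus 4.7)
\medskip

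\noindent\textbf{Plan of proof.} The $L^1$ estimate is immediate: $\cL$ is positive and preserves integrals, so $\|\cL h\|_{L^1}\le\int\cL|h|=\int|h|=\|h\|_{L^1}$. The substantive claim is the BV estimate, and I would derive it from a single-step inequality of the form $\|\cL h\|_{BV}\le\sigma\|h\|_{BV}+B_\sigma\|h\|_{L^1}$ valid for any preassigned $\sigma\in(\lambda^{-1},1)$; this then iterates cleanly, since $\cL$ is $L^1$-contractive, to give the stated bound for $\cL^n$.

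\medskip

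\noindent To bound $\|\cL h\|_{BV}$ I would start from the defining dual formula \eqref{eq:alpha0} together with the adjoint identity \eqref{eq:transfer1}:
\[
\|\cL h\|_{BV(\Omega)}=\sup_{\vf}\int_{\Omega}\cL h\cdot\div\vf=\sup_{\vf}\sum_{k}\int_{\Omega_k} h\,(\div\vf)\circ T,
\]
the supremum being over $\vf\in\cC^1_0(\bR^d,\bR^d)$ with $\|\vf\|_{L^\infty}\le 1$. On each $\Omega_k$ I introduce the auxiliary vector field $\psi^{(k)}(x):=[(DT)^{-1}(x)]\,\vf(Tx)$ and use the chain rule in the form
\[
(\div\vf)\circ T=\div\psi^{(k)}-\sum_{j}\Big(\textstyle\sum_{i}\partial_{x_i}[(DT)^{-1}]_{ij}\Big)(\vf_j\circ T)
\quad\text{on }\Omega_k.
\]
This splits $\int_{\Omega_k}h\,(\div\vf)\circ T$ into a principal term $A_k:=\int_{\Omega_k}h\,\div\psi^{(k)}$ and a remainder $R_k$ that contains only first derivatives of $(DT)^{-1}$ paired with $h\cdot(\vf\circ T)$.

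\medskip

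\noindent For the principal part, the key point is that $\|\psi^{(k)}\|_{L^\infty(\Omega_k)}\le\|[(DT)^{-1}]\|_{L^\infty(\Omega_k)}$ and $\psi^{(k)}$ extends continuously to $\overline{\Omega_k}$ by condition \textbf{\ref{c:2}}. I would bound $\sum_k A_k$ by testing directionally: the supremum in \eqref{eq:alpha0} can be realized coordinate by coordinate, so it suffices to control, for each $j$, the one-dimensional variation of $h(x_{\neq j},\cdot)$ along slices, weighted by the multiplicative factor $\|\Id_{I_{k,\ell}(x_{\neq j})}(\cdot)\,[(DT)^{-1}](x_{\neq j},\cdot)\|_{L^\infty}$. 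Hypothesis \textbf{\ref{c:4}} is designed precisely so that summing these weights across the branches hitting any given slice yields a factor $\lambda^{-1}$, producing $\sum_k A_k\le\lambda^{-1}\|h\|_{BV}$ (up to an arbitrarily small perturbation that can be absorbed by taking $\sigma$ slightly larger than $\lambda^{-1}$). The remainder is handled by splitting $\Omega_k=\Omega_k^\ve\cup\partial^\ve\Omega_k$: on $\Omega_k^\ve$ the coefficient $\partial[(DT)^{-1}]$ is in $L^\infty$ (quantitatively in $\ve$), giving a bound $C_\ve\|h\|_{L^1}$; on $\partial^\ve\Omega_k$ I slice in the direction $j$ of the derivative, use the 1D embedding that bounds $\|h(x_{\neq j},\cdot)\|_{L^\infty}$ by the slicewise variation, and invoke hypothesis \textbf{\ref{c:5}} to make the integral of $\|\partial_{x_j}[(DT)^{-1}]\|_\infty\Id_{\partial^\ve\Omega_k}$ along slices uniformly small. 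Choosing $\ve$ small enough to absorb the boundary contribution into $\sigma\|h\|_{BV}$ yields the one-step inequality.

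\medskip

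\noindent The step I expect to be most delicate is the treatment of the principal term $\sum_k A_k$: one must make rigorous the slice-by-slice comparison between $\int_{\Omega_k}h\,\div\psi^{(k)}$ and $\|h\|_{BV}$ despite $\psi^{(k)}$ being defined only locally on $\Omega_k$ and despite the countably-many-branch setting. The natural tool is the coordinate-wise characterization $\|h\|_{BV}=\sum_j\int\mathrm{Var}_{x_j}(h(x_{\neq j},\cdot))\,dx_{\neq j}$ for $BV$ on $\bR^d$, combined with the observation that, for each fixed slice and each $j$, the $\psi^{(k)}$ restricted to the intervals $I_{k,\ell}(x_{\neq j})$ act as mutually disjoint admissible test functions. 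Hypothesis \textbf{\ref{c:3}} is not needed for the Lasota--Yorke itself (it enters only to justify the change of variables in \eqref{eq:transfer1} for the remainder term); the real work is hidden in reconciling condition \textbf{\ref{c:4}} with the correct slicewise decomposition, and in verifying that the boundary contribution from condition \textbf{\ref{c:5}} can be absorbed rather than surviving into the Lasota--Yorke constant.
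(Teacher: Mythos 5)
Your plan mirrors the paper's skeleton (split over $\Omega_k$, pull back the divergence, isolate a principal term that should carry the factor $\lambda^{-1}$ and a remainder controlled by conditions \textbf{\ref{c:5}} and the $\Omega_k^\ve$/$\partial^\ve\Omega_k$ split), but it has a genuine gap exactly at the step you flag as delicate. The claim that ``the $\psi^{(k)}$ restricted to the intervals $I_{k,\ell}(x_{\neq j})$ act as mutually disjoint admissible test functions'' is not correct: on a slice in the $x_j$ direction, $\Psi_{j,k}(x_{\neq j},\cdot)\Id_{J_k(x_{\neq j})}(\cdot)$ does \emph{not} vanish at the endpoints $a_{k,\ell},b_{k,\ell}$, so it is discontinuous and cannot be used directly in \eqref{eq:alpha0} or in a slicewise integration by parts without producing trace-type boundary contributions of $h$ on $\partial\Omega_k$. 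These contributions do not disappear as a small perturbation absorbable into $\sigma>\lambda^{-1}$; they are the crux of the matter, and the whole point of the paper (announced in the introduction) is to avoid BV traces entirely.

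What the paper actually does is modify the pulled-back test function by attaching linear ramps $\eta_{j,k,\ell,x_{\neq j}}$ of width $\delta$ just outside each interval $I_{k,\ell}$, so that $\overline\Psi_{j,k}=\Psi_{j,k}+\sum_\ell\eta_{j,k,\ell}$ is continuous in $x_j$ and $\Theta_j=\sum_k\overline\Psi_{j,k}$ qualifies, via \eqref{eq:legal}, as a legitimate test function for the BV pairing. The $\delta$-overlap built into hypothesis \textbf{\ref{c:4}} is precisely what guarantees $\|\Theta_j\|_{L^\infty}\le\lambda^{-1}$ even after the ramps widen the supports; and differentiating the ramps produces, not an arbitrarily small error, but a definite extra term $d\delta^{-1}\lambda^{-1}\|h\|_{L^1}$, which goes into the constant $B$, not into $\sigma$. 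Without this construction the principal-term estimate does not close. Also, \textbf{\ref{c:3}} is not merely cosmetic: it is used (together with \textbf{\ref{c:4}} and \textbf{\ref{c:5}} and dominated convergence) to show that $\Theta_j$ is a.e.\ differentiable in $x_j$ with derivative in $L^1$ of the slice, which is what makes $\Theta$ admissible in the sense of \eqref{eq:legal}.

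So: same architecture, correct identification of where the difficulty lies, but the mechanism that actually resolves it --- the ramp regularization of the test function and the role of the $\delta$ in hypothesis \textbf{\ref{c:4}} --- is missing, and the proposed substitute (treating the raw $\psi^{(k)}$ as slicewise test functions) fails.
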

\begin{proof}
For each $h\in L^1(\Omega,m)$ and $\vf\in L^\infty(\bR^d, m)$, we have
\[
\int_{\Omega}\cL h\cdot \vf=\int_{\Omega}h\cdot \vf\circ T\leq \|h\|_{L^1}\|\vf\|_{L^\infty}.
\]
Thus $\|\cL h\|_{L^1}\leq \| h\|_{L^1}$.\footnote{ Note that equality holds if $h\geq 0$.} 

To prove the second of \eqref{eq:lasota} let $h\in BV$ and $\vf\in\cC^1_0(\bR^d,\bR^d)$ with $\|\vf\|_{L^\infty}\leq 1$,
\begin{equation}\label{eq:step1}
\begin{split}
\int_{\Omega}&\cL h\cdot\sum_{i=1}^d \partial_{x_i}\vf_i=\int_{\Omega}h\; \sum_{i=1}^d(\partial_{x_i}\vf_i)\circ T=
\sum_{i,k}\int_{\Omega_k}h\; (\partial_{x_i}\vf_i)\circ T\\
&=\sum_{i,k,j}\int_{\Omega_k}h\;\partial_{x_j}\{[(DT)^{-1}]_{ji}[ \vf_i \circ T]\}-\sum_{i,k,j}\int_{\Omega_k}h\;[ \vf_i \circ T]\partial_{x_j}[(DT)^{-1}]_{ji}\,.
\end{split}
\end{equation}
To proceed we need to recreate proper test functions. To this end remember from hypothesis {\bf \ref{c:4}} that for each $j\in\{1,\dots, d\}$ and $x_{\neq j}\in \bR^{d-1}$ we have defined intervals  $I_{k,\ell}(x_{\neq j})=:(a_{k,\ell}(x_{\neq j}), b_{k,\ell}(x_{\neq j}))$. Let us define $J_k(x_{\neq j}):=\cup_\ell\overline{ I_{k,\ell}(x_{\neq j})}$ and, for each $x\in\bR^d$,\footnote{ The numbers $\Psi_{j,k,\ell}^\pm(x_{\neq j})$ are well defined by hypothesis {\bf \ref{c:2}}. In addition, if $x\in \{x_{\neq j}\}\times\overline{J_k(x_{\neq j})}$ but not in $\{x_{\neq j}\}\times J_k(x_{\neq j})$, then $x$ is a point of accumulation of intervals and hence, by hypothesis {\bf \ref{c:4}}, is natural to define $\Psi_{j,k}(x)=0$.}
\[
\begin{split}
&\Psi_{j,k}(x)=\sum_{i=1}^d[(D_xT)^{-1}]_{ji}\cdot \vf_i\circ T(x)\cdot \Id_{J_k(x_{\neq j})}(x_j)\\
&\Psi_{j,k,\ell}^-(x_{\neq j})=\Psi_{j,k}(x_1,\dots, x_{j-1},a_{k,\ell}, x_{j+1},\dots, x_d)\\
&\Psi_{j,k,\ell}^+(x_{\neq j})=\Psi_{j,k}(x_1,\dots, x_{j-1},b_{k,\ell}, x_{j+1},\dots, x_d).
\end{split}
\]
Next, define functions $\eta_{j,k,\ell, x_{\neq j}}\in L^\infty(\bR,m)$ by
\[
\eta_{j,k,\ell, x_{\neq j}}(y)=\begin{cases} 0 &\forall\; y\in (-\infty, a_{k,\ell}-\delta]\\
                                        \Psi_{j,k,\ell}^-(x_{\neq j})\cdot (y-a_{k,\ell}+\delta)\delta^{-1}  &\forall \;y \in(a_{k,\ell}-\delta, a_{k,\ell})\\
                                        0 &\forall\; y\in [a_{k,\ell},b_{k,\ell}]\\
                                        \Psi_{j,k,\ell}^+(x_{\neq j})\cdot(b_{k,\ell}+\delta-y)\delta^{-1}  &\forall \; y \in(b_{k,\ell}, b_{k,\ell}+\delta)\\
                                        0 &\forall\; y\in  [b_{k,\ell}+\delta, +\infty),
                            \end{cases}
\]
where $\delta>0$ satisfies hypothesis {\bf \ref{c:4}}, and
\[
\begin{split}
&\overline\Psi_{j,k}(x)=\Psi_{j,k}(x)+\sum_{\ell}\eta_{j,k,\ell, x_{\neq j}}(x_j)=\sum_{\ell}\theta_{k,\ell}\;,\\
&\theta_{k,\ell}=\left[\Psi_{j,k}(x)\Id_{\overline{I_{k,\ell}(x_{\neq j})}}(x_j)+\eta_{j,k,\ell, x_{\neq j}}(x_j)\right]\;;\quad\Theta_{j}:=\sum_{k}\overline \Psi_{j,k}.
\end{split}
\]
The point of introducing such functions is that, by construction, the $\theta_{k,\ell}$ are continuous functions in the $x_j$ variable, for each $x_{\neq j}$, and the same holds for the functions $\Theta_{j}$ which are, by definition and hypothesis {\bf \ref{c:4}}, a uniformly convergent series of continuous functions.
Also, for $x\not\in\partial\Omega$, {\bf \ref{c:4}} implies 
\begin{equation}\label{eq:thetaj}
\sup_j|\Theta_j(x)|\leq\sup_j\sum_{k,\ell}\left[\sup_{x_j\in I_{k,\ell}(x_{\neq j})}|\Psi_{j,k}(x)|\right]\Id_{[a_{k,\ell}-\delta,b_{k,\ell}+\delta]}(x_j)\leq \lambda^{-1}.
\end{equation}
Moreover,\footnote{ The  first equality holds by hypotheses {\bf \ref{c:1}}, {\bf \ref{c:2}}, the second follows from hypotheses {\bf \ref{c:4}}, {\bf \ref{c:5}}, {\bf \ref{c:3}} and the Lebesgue dominated convergence Theorem.}
\[
\Theta_j(x)=\sum_{k,\ell}\int_0^{x_j}\hskip-.4cm dy\;\;\partial_y\theta_{k,\ell}(x_{\neq j},y)=\int_0^{x_j}\hskip-.4cm dy\;\;\sum_{k,\ell}\partial_y\theta_{k,\ell}(x_{\neq j},y)
\]
implies that the function $\Theta_j$ is Lebesgue almost surely differentiable with respect to the variable $x_j$ and  $\partial_{x_j}\Theta_j(x_{\neq j},\cdot) \in L^1(\bR)$. Thus, almost surely,\footnote{ Note that the last term of the next equation is bounded by $d\delta^{-1}\lambda^{-1}$ due to hypothesis {\bf \ref{c:4}}.}
\[
\begin{split}
\div\Theta(x)=&\sum_{i,k,j}\partial_{x_j}\{[(D_xT)^{-1}]_{ji}\vf_i\circ T\}\cdot  \Id_{J_k(x_{\neq j})}(x_j)\\
&+\delta^{-1}\sum_{j,k,\ell}\left\{\Psi_{j,k,\ell}^-(x_{\neq j})\Id_{[a_{k,\ell}-\delta,a_{k,\ell}]}(x_j)-\Psi_{j,k,\ell}^+(x_{\neq j})\Id_{[b_{k,\ell},b_{k,\ell}+\delta]}(x_j)\right\}.
\end{split}
\]
This implies, by \eqref{eq:legal}, that $\Theta$ has enough properties to be used as a test function.

Having set up the above machinery we rewrite \eqref{eq:step1} as
\begin{equation}\label{eq:step2}
\begin{split}
\left|\int_\Omega \!\!\cL h \cdot \div \vf\right|&\leq\left|\int_{\bR^d} \!\! \!\! h\,\div \Theta\right|+\frac {d\|h\|_{L^1}}{\lambda\delta}+\left|\sum_{i,k,j}\int_{\Omega_k} \!\! \!\!h\;[ \vf_i \circ T]\partial_{x_j}[(DT)^{-1}]_{ji}\right|\\
&\leq \frac{\|h\|_{BV}}{\lambda}+\frac {d\|h\|_{L^1}}{\lambda\delta} +\left|\sum_{i,k,j}\int_{\bR^d}\hskip-6pt h\;[ \vf_i \circ T]\partial_{x_j}[(DT)^{-1}]_{ji}\Id_{\Omega_k}\right|.
\end{split}
\end{equation}
To conclude note that one can split the last term of \eqref{eq:step2} as
\[
\sum_{i,k,j}\int_{\bR^d}\hskip-.2cm h\;[ \vf_i \circ T]\partial_{x_j}[(DT)^{-1}]_{ji}\Id_{\Omega^\ve_k}+
\sum_j\int_{\bR^d}\hskip-.2cm h\;\partial_{x_j}\hskip-.2cm\int_0^{x_j}\sum_{i,k}[ \vf_i \circ T]\partial_{x_j}[(DT)^{-1}]_{ji}\Id_{\partial^\ve\Omega_k}.
\]
Thanks to hypothesis {\bf \ref{c:5}}, for each $\sigma\in (\lambda^{-1},1)$, one can then chose $\ve$ so that 
\[
\left\|\bigg(\int_0^{x_j}dy\sum_{i,k,j}\vf_i \circ T(x_{\neq j}, y)\cdot \partial_{y}[(D_{(x_{\neq j}, y)}T)^{-1}]_{ji}\cdot \Id_{\partial^\ve\Omega_k}(x_{\neq j}, y)\bigg)\right\|_{L^\infty}\leq\sigma-\lambda^{-1}.
\]
Finally, note that if $\Omega_k^\ve\neq\emptyset$, then $\Omega_k$ must contain a ball of radius $\ve$, but only finitely many such balls can fit in $\Omega$, thus only finitely many $\Omega_k^\ve$ are non empty. Accordingly, by hypothesis {\bf\ref{c:1}} and \eqref{eq:legal}, there exists $C_\sigma>0$ such that, taking the $\sup$ over $\vf$,
\[
\| \cL h \|_{BV}\leq\lambda^{-1}\| h\|_{BV}+d\delta^{-1}\lambda^{-1}\|h\|_{L^1}+(\sigma-\lambda^{-1})\| h\|_{BV}+C_\sigma\|h\|_{L^1},
\]
which (iterating) proves the Lemma with $B=(1-\sigma)^{-1}(C_\sigma+d\delta^{-1}\lambda^{-1})$.
\end{proof}

\section{Openness of the ergodic components}\label{sec:open}
Finally, we deal with the openness of the ergodic components. As far as I know the only general result of this type (but limited to finite partitions and bounded derivatives) is in \cite{Sa} where it can be easily obtained thanks to the particular Banach space used there. Yet, I do not see how to use such a Banach space in the unbounded derivative case. In addition, the argument in \cite{Sa} does not seem to be constructive. On the contrary the radius $\ve_*$ in the following theorem is constructive and could, with some extra work, be estimated explicitly in concrete examples.

\begin{lem}\label{lem:open} For each map $T$ satisfying hypotheses {\bf \ref{c:0}-\ref{c:6}} there exists $\ve_*>0$ such that each ergodic component of the Lebesgue measure is open and contains a ball of radius $\ve_*$ on which the density of the associated absolutely continuous invariant measure (a.c.i.m.) is uniformly positive.
\end{lem}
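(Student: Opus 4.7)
The plan is to run a forward-iteration argument using both parts of hypothesis {\bf \ref{c:6}}: starting from a ball $B(x_0,r_0)$ where $h$ has large averaged density and whose first $N$ iterates stay $\ve$-away from $\partial\Omega$, I push it forward by $T^N$ to obtain an image ball $B(x_*,\ve_*)$ of universal radius, using expansion for size and the second part of {\bf \ref{c:6}} for bounded distortion. The one-dimensional trick---one-sided limits of $BV$ functions force strict positivity near a positive Lebesgue point---is unavailable for $d\geq 2$, and this is where hypothesis {\bf \ref{c:6}} enters. The measure bound $m(\partial^\ve\Omega)\leq C^d\ve^\alpha$, combined with the $BV$--Sobolev embedding $\|u\|_{L^{d/(d-1)}}\leq C_\#\|u\|_{BV}$ and the Lasota--Yorke estimate $\|u\|_{BV}\leq B\|u\|_{L^1}$ (applied with $u=h$ since $\cL h=h$, and with $u=\cL^k\Id_\Omega$), shows that the ``good set'' $G_{N,\ve}:=\{x:d(T^kx,\partial\Omega)\geq\ve \text{ for } 0\leq k<N\}$ satisfies $m(\Omega\setminus G_{N,\ve})\leq NC_\#\ve^{\alpha/d}$ and carries the bulk of $h$'s mass whenever $\ve$ is small and $N$ not too large.

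First I would pigeonhole over disjoint $r_0$-balls packed inside $G_{N,\ve}$ to find a ball $B(x_0,r_0)$, contained in a single smoothness domain (so that $T^N|_{B(x_0,r_0)}$ is a diffeomorphism onto its image), with $\fint_{B(x_0,r_0)}h\geq c_0\|h\|_{L^1}$ for a universal $c_0>0$. Writing $\log|\det D_yT^N|=\sum_{k<N}\log|\det DT|(T^ky)$, the chain rule together with $\|\nabla\det DT\|_\infty\leq C\,d(\cdot,\partial\Omega)^{-a}$ and the contraction $\|(DT)^{-1}\|_\infty\leq\lambda^{-1}$ telescopes to give, for $x,x'\in T^N(B(x_0,r_0))$ with preimages $y,y'\in B(x_0,r_0)$,
\[
\left|\log\frac{|\det D_yT^N|}{|\det D_{y'}T^N|}\right|\leq C_\#\ve^{-a}|x-x'|,
\]
uniformly in $N$, because the geometric series $\sum_k\lambda^{-(N-k)}$ converges. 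Hence bounded distortion holds on balls of radius $\lesssim\ve^a$ inside $T^N(B(x_0,r_0))$.

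Then, by the expansion, $T^N(B(x_0,r_0))$ contains a ball $B(x_*,\ve_*)$ of radius $\ve_*\asymp\lambda^Nr_0$, so I tune $r_0$ (equivalently $N$) so that $\lambda^Nr_0$ matches the universal scale $\ve^a$, fixing $\ve_*$ independent of $h$. For a.e.\ $x\in B(x_*,\ve_*)$, the smooth inverse branch $y=(T^N|_{B(x_0,r_0)})^{-1}(x)$ yields $h(x)=\cL^Nh(x)\geq|\det D_yT^N|^{-1}h(y)$; integrating over $B(x_*,\ve_*)$, applying the bounded-distortion change-of-variables formula, and using the averaged bound on $h$ over $B(x_0,r_0)$ produces $\fint_{B(x_*,\ve_*)}h\geq\delta_*$ for a universal $\delta_*>0$. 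A further iteration of the same construction starting from a ball of high averaged density inside $B(x_*,\ve_*)$ upgrades the average to a genuine a.e.\ pointwise lower bound, at the cost of shrinking $\ve_*$ by a universal factor. Openness of the ergodic component then follows by pulling $B(x_*,\ve_*)$ back through the smooth inverse branches of $T$ (each open) and invoking $\cL h=h$ with positivity, which iteratively covers the component by open sets where $h$ is positive. The main difficulty is the simultaneous tuning of $\ve$, $r_0$, and $N$: all three must be picked so that the Lasota--Yorke, Sobolev, the $\ve^{-a}$ distortion factor, and the expansion line up to yield $\ve_*$ and $\delta_*$ depending only on the constants from {\bf \ref{c:0}-\ref{c:6}} and not on the specific invariant density $h$.
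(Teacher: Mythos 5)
Your plan identifies the correct ingredients---the BV--Sobolev bound, the measure estimate from the first part of hypothesis~{\bf \ref{c:6}}, and the distortion control from the second part---but the forward-pushing strategy has a gap that cannot be repaired without essentially switching to the paper's backward-iteration argument.

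The critical unjustified step is the claim that one can ``pigeonhole over disjoint $r_0$-balls packed inside $G_{N,\ve}$'' to find a ball $B(x_0,r_0)\subset G_{N,\ve}$ contained in a single smoothness domain. The measure bound $m(\Omega\setminus G_{N,\ve})\lesssim N\,\ve^{\alpha/d}$ being small does \emph{not} imply that $G_{N,\ve}$ contains any ball of a prescribed radius~$r_0$: the complement $\cup_{k<N}T^{-k}\Gamma_\ve$ is a union of preimages of a thin neighborhood of $\partial\Omega$ and can be dense (especially since infinitely many $\Omega_k$ are allowed), so $G_{N,\ve}$ can have large measure yet no interior ball of controlled size. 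What you actually need is still stronger: not only $B(x_0,r_0)\subset G_{N,\ve}$, but also that each iterate $T^kB(x_0,r_0)$ for $k<N$ stays in a single $\Omega_{j_k}$ and $\ve$-far from $\partial\Omega$, so that $T^N|_{B(x_0,r_0)}$ is a diffeomorphism and your telescoping distortion bound applies. Since $\|DT\|$ is allowed to be unbounded, the diameter of $T^kB(x_0,r_0)$ cannot be controlled in terms of $r_0$ and $k$ alone: the ball can be torn apart or hit the singularity set at any intermediate step even when the central trajectory stays far from $\partial\Omega$. This is exactly why forward iteration is the wrong direction here.

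The paper's proof sidesteps both problems by going backward. Rather than pushing a good \emph{ball} forward, one finds a good \emph{point} $x_0$ (the measure argument only needs to produce positive-measure sets of points, not balls) with $\widetilde\cL_{\ve,n}1(x_0)>0$. For each $z$ in a fixed $\ve/2$-ball around $x_0$, one then picks the $n$-step preimage $w$ of $z$ in the same inverse branch as the surviving preimage $y$ of $x_0$ and uses that inverse branches contract uniformly: $\|T^kw-T^ky\|\leq\nu_0^{n-k-1}\ve/2$ with $\nu_0=\|DT^{-1}\|_{L^\infty}<1$. This shrinking-backward control is what guarantees the preimage trajectory stays away from the boundary (with the cutoff $\ve\nu^n$ shrinking fast enough that the distortion sum $\sum_k\nu_0^{n-k}\nu^{-(n-k)a}\ve^{1-a}$ converges, using $\nu>\nu_0^{1/a}$), and it is unavailable forward when the derivative is unbounded. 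The paper also avoids your second hand-waved step---upgrading an averaged bound $\fint_{B(x_*,\ve_*)}h\geq\delta_*$ to an a.e.\ pointwise bound---by replacing $h$ with the regularized $h_\delta$, whose $L^\infty$ gradient bound feeds directly into the distortion and gives a pointwise lower bound on $\cL^nh_\delta$ on the $\ve/2$-ball, which then passes to $h$ along an a.s.\ convergent subsequence.
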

\begin{proof}
We argue along known lines (see \cite{Ke1} for similar arguments in a closely related case). The basic idea is to compare the true dynamics with one in which only trajectories that are always far away from the singularities are considered. Such trajectories experience only the smooth part of the dynamics. To do so we need first to define a transfer operator restricted to such trajectories and then to obtain an a priori estimate on the difference between the real system and the {\em restricted} one. 

Consider the sets  $\Gamma_{\ve}=\cup_k\partial^\ve\Omega_k$, and the operators $\cL_{\ve,n}f=\cL((1-\Id_{\Gamma_{\ve\nu^n}})f)$,\footnote{ This corresponds to an open dynamics where the trajectory disappears when it gets closer than $\nu^n\ve$ to the boundary of an $\Omega_k$.} where $\nu\in(\nu_0^{\frac 1a},1)$, $\nu_0=\|DT^{-1}\|_{L^\infty}$, (the interval is not empty by assumption {\bf \ref{c:4}}). Setting $\widetilde\cL_{\ve,n}=\cL_{\ve,0}\cL_{\ve,1}\cdots\cL_{\ve,n-1}$, $\widetilde\cL_{\ve,0}=\Id$, for each $f\in BV$, $f\geq 0$, we have\footnote{ In the second line we have equality since $\cL$ is an $L^1$ isometry on positive functions, in the third we have used the H\"older inequality \eqref{eq:sobo} and hypothesis {\bf\ref{c:6}}. The forth line follows from Lemma \ref{lem:lasota-yorke}.}
\begin{equation}\label{eq:boundary1}
\begin{split}
\|\cL^n f-&\widetilde\cL_{\ve,n}f\|_{L^1}\leq \sum_{k=0}^{n-1}\| \cL^{k+1}\cL_{\ve,k+1}\cdots\cL_{\ve,n-1} f-\cL^{k}\cL_{\ve,k}\cdots\cL_{\ve,n-1} f\|_{L^1}\\
&= \sum_{k=0}^{n-1}\|\Id_{\Gamma_{\ve\nu^k}}\cdot\cL_{\ve,k+1}\cdots\cL_{\ve,n-1} f\|_{L^1}
\leq \sum_{k=0}^{n-1}\|\Id_{\Gamma_{\ve\nu^k}}\cdot\cL^{n-k-1} f\|_{L^1}\\
&\leq\sum_{k=0}^{n-1}m(\Gamma_{\ve\nu^k})^{\frac 1d}\|\cL^{n-k-1}f\|_{L^{\frac d{d-1}}}
\leq\sum_{k=0}^{n-1}C_\#\ve^{\frac {\alpha } d}\nu^{\frac {\alpha k}d}\|\cL^{n-k-1}f\|_{BV}\\
&\leq\frac {C_\# \ve^{\frac \alpha d}}{1-\nu^{\frac \alpha d}}\|f\|_{BV}.
\end{split}
\end{equation}

Next, we choose some mollifier $\eta$ and, for each $\delta\in(0,1)$, we introduce the regularization $h_\delta$ of the invariant measure as defined in Appendix \ref{sec:regular}, we will be interested in estimates uniform in $\ve$ for $\delta$ arbitrarily small.

Suppose that, for some $x_0\in \Omega\setminus \Gamma_\ve$, $\widetilde\cL_{\ve,n} 1(x_0)>0$. This means that, for each $y\in T^{-n}\{x_0\}$ that contributes a non zero term to the sum defining $\widetilde\cL_{\ve,n}$, it holds true $T^k y\not\in \Gamma_{\ve\nu^{n-k-1}}$ for each $k\in\{0,\dots,n-1\}$. Hence, if $\|x_0-z\|\leq \ve/2$, it follows that there exists $w\in T^{-n}\{z\}$ such that, for each $k\in \{0,\dots, n-1\}$, $\|T^k w-T^k y\|\leq \nu_0^{n-k-1}\ve/2$, hence $T^k w\not\in \Gamma_{\ve\nu^{n-k-1}/2}$. 
A standard distortion argument together with \eqref{eq:disto}) yields
\[
\begin{split}
|\det(D_wT^n)^{-1}h_\delta(w)|&= e^{-\sum_{k=0}^{n-1}\ln|\det(D_{T^kw}T)|}\;h_\delta(w)\\
&\geq e^{-C_\eta\{\sum_{k=0}^{n-1}\nu_0^{(n-k)}\nu^{-(n-k)a}\ve^{1-a}+\nu_0^n\ve \delta^{-d-1}\}}|\det(D_yT^n)^{-1}| h_\delta(y),
\end{split}
\]
where we have used assumption {\bf \ref{c:6}}. The above implies that, for each $\ve>0$, there exists $\gamma_\ve>0$ such that, for each $n\in\bN$ and $\delta \geq \delta_n=\nu_0^{\frac{n}{d+1}}$, holds true
\begin{equation}\label{eq:ed-low}
\cL^{n} h_\delta(z)\geq \widetilde\cL_{\ve/2,n} h_\delta(z)\geq \gamma_\ve\widetilde\cL_{\ve,n} h_\delta(x_0).
\end{equation}

It remains to find good points $x_0$. Let $A=\{x\in\Omega\;:\; h(x)\geq \frac 12\}$, then we have $m(A)>C_\#$.\footnote{ Indeed,
\[
1=m(\Id_A\, h)+m(\Id_{A^c}\, h)\leq C_\#\|h\|_{BV}m(A)^{\frac 1d}+\frac 12m(A^c).
\]
Thus, $1\leq 2C_\#\|h\|_{BV}m(A)^{\frac 1d}-m(A)$, which implies $m(A)\geq (2C_\#\|h\|_{BV})^{-d}$. But the second of \eqref{eq:lasota} implies $\|h\|_{BV}=\|\cL h\|_{BV}\leq \sigma\|h\|_{BV}+B$. Hence, $m(A)\geq (2C_\# B)^{-d}(1-\sigma)^d$.}
Consider $B_{\delta,n}=\{x\in A\;:\; \widetilde\cL_{\ve,n} h_\delta(x)\leq \frac 14\}$, then by \eqref{eq:boundary1} and Lemma \ref{lem:L1delta}
\[
\frac 14 m(B_{\delta,n})\geq \|\Id_{B_{\delta, n}} h\|_{L^1}-\|h- \widetilde\cL_{\ve,n} h_\delta\|_{L^1}\geq \frac 12 m(B_{\delta,n})-C_\eta(\delta^{\frac\alpha d}+\ve^{\frac\alpha d}).
\]
Thus, provided $\delta, \ve$ are both small enough, we have $m(A\setminus (B_{\delta,n}\cup\Gamma_\ve))\geq \frac 12m(A)$ for all $n\in\bN$. 

To conclude, set $\tilde B_{n,\ve}=B_{\delta_n,n}\cup\Gamma_\ve$. The above implies that there exist a positive measure set $D\subset A$ such that for each $x_0\in D$ there exists an infinite sequence $\{n_j\}$ such that $x_0\in A\setminus \tilde B_{n_j,\ve}$ for all $j\in\bN$.\footnote{ By the monotone convergence Theorem 
$\int_\Omega \sum_{n=0}^\infty \Id_{A\setminus \tilde B_{n,\ve}}=\sum_{n=0}^\infty m(A\setminus \tilde B_{n,\ve})=\infty$,
thus the sum must diverge on a positive measure set.}
It follows that for all $x_0\in D$ and all $z$ in an $\ve/2$ neighborhood of $x_0$, by equation \eqref{eq:ed-low}, we have $\cL^{n_j} h_{\delta_{n_j}}(z)\geq \frac 14 \gamma_\ve$.\footnote{ Since $x_0\in A\setminus \tilde B_{n_j,\ve}$, it must be $\widetilde \cL_{\ve,n_j}1(x_0)>0$.}
On the other hand $f_j:=\cL^{n_j} h_{\delta_{n_j}}$ converges to $h$ in $L^1$,\footnote{ In fact, $\|h-f_j\|_{L^1}=\|\cL^{n_j}(h-h_{\delta_{n_j}})\|_{L^1}\leq\|h-h_{\delta_{n_j}}\|_{L^1} \leq C_\eta\delta^{\frac\alpha d}_{n_j}\|h\|_{BV}$, by  Lemma \ref{lem:L1delta}.} hence there exists a subsequence $f_{j_k}$ which converges to $h$ almost surely. Finally, this implies $h\geq  \frac 14 \gamma_\ve$ almost surely in the ball $\{z\in \Omega\;:\; \|z-x_0\|\leq \ve/2\}$, which shows that the support of any invariant measure must contain an open ball.

We are left with the task of proving the openness of the ergodic components. Given an ergodic component $\Delta\subset \Omega$ let $h$ be the density of the a.c.i.m. $\mu$ supported on it. By the previous arguments we know that there exists an open ball $B\subset \Delta$, such that $\Id_B\leq C_\#h$. Then $\cL^n\Id_B\leq C_\#\cL^n h=C_\# h$ implies that $\Lambda_0:=\cup_{n=0}^\infty T^nB$ is contained in the support of $h$. Next, notice that for all connected open sets $U\subset \Omega$, $U_1=U\setminus T^{-1}\partial\Omega$ is open.\footnote{ Indeed, since $U$ is connected it must be contained in some $\Omega_k$, then $T_k=T|_{\Omega_k}$ is continuous by hypothesis hence  $ T^{-1}(\partial\Omega)\cap\Omega_k=  T_k^{-1}(\partial\Omega)$ is closed.} Accordingly, $TU_1\subset \Omega$ is open which, by hypothesis {\bf\ref{c:0}}, shows that $TU$ differs from an open set by a zero measure set. Iterating this argument\footnote{ By considering a connected component at a time.} shows that there exists an open set $V\subset \Lambda_0$ such that $\mu(\Lambda_0\setminus V)=0$. On the other hand, $T^{-1}\Lambda_0\supset \Lambda_0$, hence $\mu(T^{-1}\Lambda_0\setminus\Lambda_0)=0$. This implies that there exists $\Lambda_1\supset \Lambda_0$, $\mu(\Lambda_1\setminus\Lambda_0)=0$, such that $T^{-1}\Lambda_1=\Lambda_1$, hence $0=\mu(\Delta\setminus\Lambda_1)=m(\Delta\setminus V)$ which implies the Lemma.
\end{proof}

\appendix
\section{BV functions}\label{sec:bv}

Here we collect some basic facts about BV functions.\footnote{ For the general theory of BV functions see \cite{EG}, or see \cite{KL1} for a quick introduction to the properties relevant to the present context.}

\begin{lem}\label{lem:bv} There exists $C_d>0$ such that, for each $h\in BV(\Omega)$, $A\subset \bR^d$,
\begin{equation}\label{eq:sobo}
\|\Id_A h\|_{L^1}\leq m(A)^{\frac 1d}\|h\|_{L^{\frac{d}{d-1}}}\leq C_d m(A)^{\frac 1d} \|h\|_{BV}.
\end{equation}

The set $B_1:=\{h\in BV\;:\; \| h\|_{BV}\leq 1\}$ is relatively compact in the $L^1$ topology.

For all $j\in\{1,\dots, d\}$ and almost all $x_{\neq j}$, $h(x_{\neq j},\cdot)\in BV(\bR)\subset L^{\infty}(\bR, m)$.

For each $h\in BV(\bR^d)$ and all $\vf\in L^\infty(\bR^d, m)$ of compact support and  such that, $x_{\neq j}$ a.s., $\vf_j(x_{\neq j}, \cdot)\in\cC^0(\bR,\bR)$, $(\partial_{x_j}\vf_j)(x_{\neq j}, \cdot)\in L^1(\bR, m)$ holds true
\begin{equation}\label{eq:legal}
\left|\int_{\bR^d} h\,\div \vf\right|\leq \|h\|_{BV(\bR^d)}\|\vf\|_{L^\infty}.
\end{equation}
\end{lem}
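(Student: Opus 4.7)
All four assertions are standard facts about BV functions, so I would organize the proof as a sequence of short arguments, referring to \cite{EG} where only a citation is appropriate.

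For \eqref{eq:sobo}, the first inequality is simply Hölder's inequality with conjugate exponents $d$ and $d/(d-1)$. The second is the Gagliardo--Nirenberg--Sobolev embedding $\|h\|_{L^{d/(d-1)}}\leq C_d\|h\|_{BV}$. I would prove this by first establishing it for $h\in\cC^1_0(\bR^d)$ via the classical slicing argument (writing $|h(x)|^{d/(d-1)}$ as a product of $d$ factors, each controlled by $\int |\partial_{x_i}h|$ integrated over one line, and iterating Hölder), and then extending to $h\in BV$ by mollification: mollifying in all variables gives $h_\rho\in\cC^\infty$ with $\|h_\rho\|_{BV}\leq\|h\|_{BV}$ and $h_\rho\to h$ in $L^1$, and Fatou's lemma upgrades the inequality in the limit.

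For the compactness of $B_1$, I would use the standard Rellich--Kondrachov-type argument for BV. Any sequence $\{h_n\}\subset B_1$ mollifies to $\{h_{n,\rho}\}$ that is uniformly $\cC^1$-bounded on the compact $[0,1]^d$, so Arzelà--Ascoli yields, for each $\rho$, a subsequence converging in $L^1$. Combining this with the uniform smoothing estimate $\|h-h_\rho\|_{L^1}\leq C\rho\|h\|_{BV}$ (cf.\ Lemma \ref{lem:L1delta}) via a diagonal extraction gives an $L^1$-convergent subsequence, using that all functions vanish outside $\Omega\subset[0,1]^d$.

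For the slicing statement, I would test \eqref{eq:alpha0} against vector fields of the form $\vf=(0,\dots,\psi(x_{\neq j})\chi(x_j),\dots,0)$ with $\psi,\chi$ smooth of compact support, integrate out $x_{\neq j}$ by Fubini, and then take suprema over the one-dimensional $\chi$; this yields $\int_{\bR^{d-1}}\|h(x_{\neq j},\cdot)\|_{BV(\bR)}\,dx_{\neq j}\leq\|h\|_{BV(\bR^d)}$, so $h(x_{\neq j},\cdot)\in BV(\bR)$ for a.e.\ $x_{\neq j}$. The one-dimensional inclusion $BV(\bR)\subset L^\infty(\bR)$ follows from the fact that any $g\in BV(\bR)$ has a representative $\tilde g$ with $\tilde g(y)=\int_{-\infty}^y d\mu_g$ for a finite signed measure $\mu_g$, hence $\|\tilde g\|_{L^\infty}\leq\|g\|_{BV(\bR)}$.

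Finally, \eqref{eq:legal} follows by approximation of the test field. I would mollify each component $\vf_j$ in the $x_j$-variable on scale $\rho$ to produce $\vf^{(\rho)}\in\cC^1$ with $\|\vf^{(\rho)}\|_{L^\infty}\leq\|\vf\|_{L^\infty}$ and $\partial_{x_j}\vf_j^{(\rho)}\to\partial_{x_j}\vf_j$ in $L^1$ (this uses the given continuity of $\vf_j$ in $x_j$ together with the $L^1$ regularity of $\partial_{x_j}\vf_j$), and further cut off and mollify in the remaining variables to obtain $\cC^1_0$ approximations. Applying \eqref{eq:alpha0} to each approximation and passing to the limit, using the slicing result from step three to control $\int h\,\partial_{x_j}\vf_j$ pointwise in $x_{\neq j}$, gives \eqref{eq:legal}. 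The main delicate point throughout is the Gagliardo--Nirenberg--Sobolev inequality; everything else is soft functional-analytic manipulation.
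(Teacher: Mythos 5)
Your proposal is correct and follows essentially the same route as the paper: H\"older plus Gagliardo--Nirenberg--Sobolev for \eqref{eq:sobo}, mollification and Arzel\`a--Ascoli for compactness, the one-dimensional slicing theorem, and a two-stage mollification of $\vf$ (first in $x_j$, then in $x_{\neq j}$) combined with slicing and dominated convergence for \eqref{eq:legal}. The only real difference is that the paper simply cites Evans--Gariepy for the first three assertions rather than re-deriving them; if you do re-derive the slicing statement, be aware that passing from the bound for each fixed one-dimensional test function $\chi$ to $\int V_j h\,dx_{\neq j}\leq\|h\|_{BV}$ requires justifying the interchange of the supremum over $\chi$ with the $x_{\neq j}$-integral (e.g.\ by reducing to a countable dense family), a point your sketch glosses over.
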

\begin{proof}
The first inequality of \eqref{eq:sobo} is H\"older inequality:
\[
\int_A |h|\leq m(A)^{\frac1d}\|h\|_{L^{\frac{d}{d-1}}},
\] 
while the second is a Sobolev type inequality, see \cite[section 5.6.1, Theorem 1, (i)]{EG}.

The second statement follows directly from \cite[section 5.2.3, Theorem 4]{EG} since $B_1\subset\{h\in BV((-1,2)^d)\;:\; \|h\|_{BV((-1,2)^d)}\leq 1\}$.

To prove the third statement let 
\[
V_jh(x_{\neq j})=\sup_{\substack{\vf\in\cC^1_0(\bR,\bR)\\|\vf|_{L^\infty}\leq 1}}\int_{\bR}h(x_{\neq j}, t)\vf'(t) dt.
\]
Then \cite[section 5.10.2, Theorem 2]{EG} states that $V_jh\in L^1(\bR^{d-1},m)$. Hence, $V_j h$ must be almost surely finite, from which the statement follows (taking into account \eqref{eq:sobo} for $d=1$). 

To prove \eqref{eq:legal} we have to deal with the fact that $\vf$ is not a legal test function since it may be discontinuous. Nevertheless, this is a superficial problem: given a positive function $\eta\in\cC^\infty(\bR,\bR)$ with integral one and supported in $[-1,1]$ define $\eta_\ve(y)=\ve^{-1}\eta(\ve^{-1}y)$, $y\in\bR$, and $\bar\eta_{j,\ve}(x)=\prod_{\substack{k=1\\ k\neq j}}^d\eta_\ve( x_k)$ and  let $*$ stand for the usual convolution.
We then define $\vf_{\ve_1,\ve_2}\in\cC^\infty(\bR^d,\bR^d)$ by\footnote{ The second convolution is with respect the variable $x_j$.} $(\vf_{\ve_1,\ve_2})_j:=\bar\eta_{j,\ve_1}*(\eta_{\ve_2}*\vf_{j})$. Note that $\|\vf_{\ve_1,\ve_2}\|_{L^\infty}\leq \|\vf\|_{L^\infty}$. For each $h\in BV(\bR^d)$, the second statement of the Lemma implies, $x_{\neq j}$ almost surely,\footnote{The integration by part in the last line is allowed since $\vf_j$ is, $x_{\neq j}$ almost surely, of bounded variation in the $x_j$ variable.}  
\[
\begin{split}
 \int \!\!dx_j h(x_{\neq j}, x_j) \partial_{x_j}\vf_{j}(x_{\neq j}, x_j)&=\lim_{\ve_2\to 0}\int \!\! dx_j h(x_{\neq j}, x_j) \int_{\bR}dy\partial_y\vf_j(x_{\neq j},y)\eta_{\ve_{2}}(x_j-y)\\
 &=\lim_{\ve_2\to 0} \!\int  \!\! dx_j h(x_{\neq j}, x_j) \partial_{x_j} \!\!\int_{\bR}dy\vf_j(x_{\neq j},y)\eta_{\ve_{2}}(x_j-y).
\end{split}
\]
In addition, for each $\ve_2>0$, the right hand side is bounded by $V_j h \|\vf_j\|_{L^\infty}$ which, by
the above results, belongs to $L^1(\bR^{d-1},m)$. Hence, by Lebesgue dominate convergence Theorem,
\[
\begin{split}
\int_{\bR^d} h\; \partial_{x_j}\vf_j&=\lim_{\ve_2\to 0}\int_{\bR^d} h\; \partial_{x_j}(\eta_{\ve_2}*\vf_j)=\lim_{\ve_2\to 0}\lim_{\ve_1\to 0}\int_{\bR^d} \bar \eta_{j,\ve_1}*h\; \cdot( \partial_{x_j}\eta_{\ve_2}*\vf_j)\\
&=\lim_{\ve_2\to 0}\lim_{\ve_1\to 0}\int_{\bR^d} h\; \cdot\partial_{x_j}[\bar \eta_{j,\ve_1}*( \eta_{\ve_2}*\vf_j)]\,.
\end{split}
\]
Thus,
\[
\begin{split}
\left|\int_{\bR^d} h\; \div\vf\right|&= \lim_{\ve_2\to 0} \lim_{\ve_1\to 0}\left|\int_{\bR^d} h\; \div\vf_{\ve_1,\ve_2}\right |\leq\lim_{\ve_2,\ve_1\to 0}\|h\|_{BV(\bR^d)}\|\vf_{\ve_1,\ve_2}\|_{L^\infty}\\
&\leq \|h\|_{BV(\bR^d)}\|\vf\|_{L^\infty}.
\end{split}
\]
\end{proof}

\section{A regularization scheme}\label{sec:regular}
Here I describe a way to regularize a $BV(\Omega)$ density and some relevant related properties. This is fairly standard, I add it here just for the reader convenience.

Let us fix $\eta\in\cC^\infty(\bR,\bR_+)$, $\operatorname{supp} \eta\subset [-1,1]$, $\int\eta=1$ and set, for $\delta>0$, $\bar \eta_\delta(x)=\delta^{-d}\prod_{i=1}^d\eta(\delta^{-1}x_i)$. Let $h\in BV(\Omega)$ be such that, $h\geq 0$, $\int h=1$. For each $\delta>0$ define $h_\delta(x)=\int_{\Omega}\bar \eta_\delta(x-y)h(y)dy +\delta$. 

\begin{lem}\label{lem:L1delta} If $\Omega$ satisfies the first inequality of hypothesis {\bf\ref{c:6}}, then the following properties hold true:
\[
\begin{split}
&h_\delta\geq \delta\;,\\
& \|\nabla h_\delta\|_{L^1}\leq \|h\|_{BV}\;, \\
&\|\nabla h_\delta\|_{L^\infty}\leq C_\eta\delta^{-d}\|h\|_{BV}\;,\\
&\|h-h_\delta\|_{L^1}\leq C_\eta \delta^{\frac \alpha d}\|h\|_{BV} .
\end{split}
\]
\end{lem}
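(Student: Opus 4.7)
The plan is to verify each of the four estimates separately, all via the duality definition of the $BV$ norm. Property 1 is immediate from the non-negativity of $h$ and $\bar\eta_\delta$ together with the additive $\delta$. For Property 2, since $h_\delta-\delta=\bar\eta_\delta * h$, the convolution commutes with both the divergence and the $BV$ pairing: for any test field $\psi\in\cC^1_0(\bR^d,\bR^d)$ with $\|\psi\|_{L^\infty}\leq 1$, the mollified field $\bar\eta_\delta * \psi$ is again in $\cC^1_0$ with the same $L^\infty$ bound, so
\[
\int h_\delta \,\div\psi = \int h\,\div(\bar\eta_\delta * \psi)\leq \|h\|_{BV};
\]
taking the supremum over $\psi$ gives $\|\nabla h_\delta\|_{L^1}\leq\|h_\delta\|_{BV}\leq\|h\|_{BV}$. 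For Property 3 the same idea is applied pointwise: for fixed $x$ and $j$ take $\psi^{(x)}(y)=e_j\,\bar\eta_\delta(x-y)$, a $\cC^\infty$ compactly supported vector field with $\|\psi^{(x)}\|_{L^\infty}=\|\bar\eta_\delta\|_\infty\leq C_\eta\delta^{-d}$. Differentiating under the integral and integrating by parts in $y$ yields $\partial_{x_j}h_\delta(x)=-\int h\,\div\psi^{(x)}$, and the $BV$ definition then gives $|\partial_{x_j}h_\delta(x)|\leq C_\eta\delta^{-d}\|h\|_{BV}$.

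Property 4 is the main point. I would first establish the standard translation estimate
\[
\|h(\cdot+z)-h\|_{L^1(\bR^d)} \leq d\,|z|_\infty\,\|h\|_{BV},\qquad z\in\bR^d,
\]
reducing via a coordinate-by-coordinate telescoping (the factor $d$ comes from the $\ell^\infty$ convention on $BV$ test fields used in the paper) to $z=te_j$, in which case, for any $\varphi\in\cC^1_0$ with $|\varphi|\leq 1$,
\[
\int \varphi(x)[h(x+te_j)-h(x)]\,dx = -\int_0^t\!\!\int h(y)\,\partial_{y_j}\varphi(y-s e_j)\,dy\,ds,
\]
each inner integral being bounded by $\|h\|_{BV}$ through the $BV$ definition applied to the test field $\varphi(\cdot-se_j)\,e_j$. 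Choosing $\eta$ even (harmless) then gives
\[
\|\bar\eta_\delta * h - h\|_{L^1}\leq \int \bar\eta_\delta(z)\,\|h(\cdot-z)-h\|_{L^1}\,dz \leq d\,\delta\,\|h\|_{BV}.
\]
Since $h - h_\delta = (h-\bar\eta_\delta * h) - \delta$, we get $\|h-h_\delta\|_{L^1(\Omega)}\leq d\delta\|h\|_{BV}+\delta\,m(\Omega)$. The Sobolev--Hölder bound \eqref{eq:sobo} gives $1=\int h\leq m(\Omega)^{1/d}\|h\|_{L^{d/(d-1)}}\leq C_d\|h\|_{BV}$, so the stray $\delta m(\Omega)$ is absorbed into $C\delta\|h\|_{BV}$. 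Finally, using $\alpha/d\leq 1$ and assuming $\delta\leq 1$, one obtains $\|h-h_\delta\|_{L^1}\leq C_\eta\delta^{\alpha/d}\|h\|_{BV}$.

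The hardest step is the translation estimate, and the main subtlety is respecting the $\ell^\infty$ convention in the paper's $BV$ definition (which is what produces the factor $d$ in the coordinate decomposition) and bookkeeping the additive $\delta$ term. The first inequality of hypothesis {\bf \ref{c:6}} is not strictly needed in the approach above, but it enters naturally---and presumably motivates its appearance in the hypothesis---via an alternative route that splits $\Omega$ into the bulk $\Omega\setminus\partial^{c\delta}$ and the boundary layer $\partial^{c\delta}$, whose measure is $\leq C^d\delta^\alpha$. On the boundary layer one applies Sobolev--Hölder directly to $h-\bar\eta_\delta*h\in BV$,
\[
\|\Id_{\partial^{c\delta}}(h-\bar\eta_\delta*h)\|_{L^1}\leq m(\partial^{c\delta})^{1/d}\,\|h-\bar\eta_\delta*h\|_{L^{d/(d-1)}}\leq C\,\delta^{\alpha/d}\,\|h\|_{BV},
\]
while the bulk is handled by a local translation argument inside each smoothness domain $\Omega_k$; this explains directly the exponent $\alpha/d$ in the statement.
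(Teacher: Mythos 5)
Your arguments for the first three estimates match the paper's in substance (nonnegativity plus the additive $\delta$; duality with the mollified test field; the pointwise test-field version of $\|\bar\eta_\delta * Dh\|_{L^\infty}\le\|\bar\eta_\delta\|_{L^\infty}\|Dh\|$). For the fourth estimate your main route is genuinely different from the paper's and, as far as I can tell, correct. The paper introduces a $\cC^1$ cutoff $\chi_\delta$ supported in the interior, proves $\|\chi_\delta(h-\bar\eta_\delta * h)\|_{L^1}\le C_\eta\delta\|h\|_{BV}$ by a change-of-variables identity that rescales the segment from $x$ to $y$ and produces the auxiliary kernel $\hat\eta_{t\delta}(x)=\bar\eta_{t\delta}(x)x(t\delta)^{-1}$, and then controls the boundary layer where $\chi_\delta<1$ via the measure bound $m(\cup_k\partial^\delta\Omega_k)\le C^d\delta^\alpha$ of hypothesis {\bf\ref{c:6}} together with \eqref{eq:sobo}; this is precisely where the exponent $\alpha/d$ comes from. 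You instead use the standard $BV$ translation estimate $\|h(\cdot+z)-h\|_{L^1(\bR^d)}\le C|z|\,\|h\|_{BV(\bR^d)}$, applied to the zero extension of $h$ over all of $\bR^d$ (no cutoff needed), average over $z$ in $\operatorname{supp}\bar\eta_\delta$, and absorb the additive $\delta m(\Omega)$ via $1=\int h\le C_d\|h\|_{BV}$ and $m(\Omega)\le 1$; the stated exponent then follows trivially from $\delta\le\delta^{\alpha/d}$ for $\delta\le 1$. Your observation that hypothesis {\bf\ref{c:6}} is not actually required for this lemma is correct: the paper uses it only because its change-of-variables trick forces a compactly supported test field $\vf_\delta=\chi_\delta\vf$, not for any intrinsic reason, and your approach yields the sharper $O(\delta)$ bound directly. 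The one inaccuracy is in your speculative closing paragraph: the paper's bulk estimate is a single global change-of-variables identity with the cutoff, not a ``local translation argument inside each $\Omega_k$''.
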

\begin{proof}
The first inequality is trivial.
The second follows from
\[
\begin{split}
\|\nabla h_\delta\|_{L^1}&=\sup_{\substack{\vf\in\cC_0^1\\ \|\vf\|_{L^\infty}\leq 1}}\sum_{k=1}^d\int_{\bR^d}\hskip-.2cm dx\; \vf_k(x) \int_{\Omega}\hskip-.2cm dy\;\partial_{x_k} \bar \eta_\delta(x-y) h(y)\\
&=\sup_{\substack{\vf\in\cC_0^1\\ \|\vf\|_{L^\infty}\leq 1}}\sum_{k=1}^d\int_\Omega h(x) \partial_{x_k}(\bar \eta_\delta*\vf_k)(x).
\end{split}
\]
The next inequality follows from $\|\bar \eta_\delta\|_{L^\infty}\leq C_\eta\delta^{-d}$. To prove the last inequality, let $\chi_\delta\in\cC^1(\bR^d,[0,1])$ such that $\operatorname{supp}\chi_\delta\subset \Omega^{\delta/ 2}$ and $\operatorname{supp}(1-\chi_\delta)\subset  (\Omega^\delta)^c$. Then, by \eqref{eq:sobo},
$\|h-h_\delta\|_{L^1}\leq \|\chi_\delta(h-\bar \eta_\delta*h)\|_{L^1}+\delta\|h\|_{L^1}+C\delta^{\frac \alpha d}\|h\|_{BV}$. Next, for each $g\in \cC^1(\bR^d,\bR)$, and $\vf\in\cC^0(\Omega,\bR)$, let $\vf_\delta=\chi_\delta\vf$ and
\[
\begin{split}
\int_{\Omega}dx&\int_{ \bR^d}dy\vf_\delta(x) \bar \eta_\delta(x-y)(g(y)-g(x))\\
&=\int_{\Omega\times \bR^d}\int_0^1 dt\;\vf_\delta(x) \bar \eta_\delta(x-y)\langle\nabla g(x+(y-x)t),y-x\rangle\\
&=\int_{\Omega\times\bR^d}\int_0^1 dt\; t^{-d-1}\vf_\delta(x) \bar \eta_\delta(t^{-1}(x-\xi))\langle\nabla g(\xi),\xi-x\rangle\\
&=\delta\int_0^1 dt\int_{\bR^d}\div (\hat \eta_{t\delta} *\vf_\delta)\cdot g
\end{split}
\]
where $\hat \eta_{\delta}(x)=\bar\eta_\delta(x)x\delta^{-1}$. Hence $\|\chi_\delta(g-\bar \eta_\delta *g)\|_{L^1(\Omega)}\leq C_\eta\delta\|g\|_{BV(\bR^d)}$ and the same then holds for $g\in BV$.\footnote{Indeed, if $g\in BV$, then there exists $\{g_n\}\subset \cC^1$, $\sup_n\|g_n\|_{BV}<\infty$ such that $g_n$ converges to $g$ in $L^1$ (see \cite[5.2.2, Theorem 2]{EG}) and the claim follows since convolutions are $L^1$ contractions.} Accordingly, for each $h\in BV$,
\[
\|h-h_\delta\|_{L^1}\leq C_\eta\delta^{\frac\alpha d} \|h\|_{BV(\Omega)}.
\]
\end{proof}
Here is a useful consequence of the above Lemma: for each two point $x,y\in\bR^d$
\begin{equation}\label{eq:disto}
h_\delta(x)\leq e^{C_\eta\delta^{-d-1}\|x-y\|}h_\delta(y).
\end{equation}
This follows since, setting $g(t)=h_\delta(tx+(1-t)y)$, holds 
\[
\left|\ln \frac{h_\delta(x)}{h_\delta(y)}\right|=|\ln g(1)-\ln g(0)|\leq C_\#\|x-y\|\frac{\|\nabla h_\delta\|_{L^\infty}}{\inf h_\delta}\leq C_\eta \|x-y\|\delta^{-d-1}.
\]

\end{document}